\numberwithin{equation}{section}
\newtheorem{theorem}{Theorem}[section]
\newtheorem{proposition}[theorem]{Proposition}
\newtheorem*{theorem*}{Theorem}
\theoremstyle{definition}
\newtheorem{example}[theorem]{Example}
\newtheorem{remark}[theorem]{Remark}
\newcommand{\PP}{ \ensuremath{\mathbb{P}}}
\newcommand{\mL}{\mathcal{L}}
\renewcommand\P{\mathbb P}
\newcommand\GF{K}
\begin{document}

\author{M.~Dumnicki, {\L}.~Farnik,  B.~Harbourne,\\ T.~Szemberg, H.~Tutaj-Gasi\'nska}

\title{ Veneroni maps}

\maketitle
\begin{abstract}
   Veneroni maps are a class of birational transformations of projective spaces. This class contains
   the classical Cremona transformation of the plane, the cubo-cubic transformation of the space
   and the quatro-quartic transformation of $\P^4$. Their common feature is that they are determined by
   linear systems of forms of degree $n$ vanishing along $n+1$ general flats of codimension $2$ in $\P^n$.
   They have appeared recently in a work devoted to the so called unexpected hypersurfaces. The purpose of this
   work is to refresh the collective memory of the mathematical community about these somewhat forgotten
   transformations and to provide an elementary description of their basic properties given from a modern
   point of view.
\\
\\
\footnotesize{Keywords: Cremona transformation, birational transformation
\\	
	2010	Mathematics Subject Classification: 14E07}
\end{abstract}

\section{Introduction}
The aim of this note is to give a detailed description of Veneroni's Cremona transformations in $\P^n$.
They were first described by Veneroni in \cite{veneroni}, and then discussed for $n=4$ by Todd in \cite{Todd1930} and by Blanch in \cite{Blanch}
and for $n\geq3$ by Snyder and Rusk in \cite{Snyder2} (with a focus on $n=5$) and by Blanch again in \cite{Blanch2}.
The base loci of the Veneroni transformations involve certain varieties swept by lines
that were considered for $n=4$ by Segre in \cite{segre} and for $n\geq3$ by Eisland in \cite{eiesland}.
Evolution in terminology and rigor can make it a challenge to study classical papers. Our purpose here is
to bring this work together in one place, in a form accessible to a modern audience.
In order to use Bertini's Theorem, we assume the ground field $\GF$ has characteristic 0.

Consider $n+1$ distinct linear subspaces $\Pi_0,\dots,\Pi_{n}\subset\P^n$ of codimension $2$.
Let $\mL_n$ be the linear system of hypersurfaces in $\P^n$ of degree $n$ containing
$\Pi_0\cup\dots\cup\Pi_{n}$ and let $N+1$ be the vector space dimension of $\mL_n$
(we will see that $N=n$ when the $\Pi_j$ are general, hence by semi-continuity we have $N\geq n$).
We denote by $v_n:\P^n\dashrightarrow \P^N$ the rational map given by $\mL_n$.
If $N=n$ and if in addition $v_n$ is birational, we refer to $v_n$ as a Veneroni transformation.
(Here we raise an interesting question: is $v_n$ birational to its image if and only if $N=n$?
When $n=2$, it is not hard to check that $N=n$ always holds and that $v_n$ is always birational.)

When the $\Pi_j$ are general, we will see that $v_n$ is a Veneroni transformation
whose inverse is also given by a linear system of forms of degree $n$
vanishing on $n+1$ codimension 2 linear subspaces of $\P^n$. In this situation,
$v_2$ is the standard quadratic Cremona transformation of $\mathbb{P}^2$,
$v_3$ is a cubo-cubic Cremona transformation of $\mathbb{P}^3$ (see \cite[Example 3.4.3]{dolg}) and
$v_4$ is a quarto-quartic Cremona transformation of $\mathbb{P}^4$ (see \cite{Todd1930}). In \cite{H-M-TG}
the quarto-quartic Cremona transformation was used to produce some unexpected hypersurfaces.

The paper is organized as follows: we start in Section 2 with characterizing degree $n-1$ hypersurfaces in $\P^n$, containing
$n$ general linear subspaces of codimension $2$.

In Section 3 we investigate the linear system giving the Veneroni transformation.
When the spaces $\Pi_i$ are general, we prove that the dimension of $\mL_n$ is $n+1$,
we describe the base locus of this system, and prove that $v_n$ is birational.

In Section 4 we give the inverse $u_n$ of $v_n$ explicitly and show that $u_n$ is given by a
possibly sublinear system of the linear system
of forms of degree $n$ vanishing on a certain set of $n+1$ codimension 2 linear subspaces.

The last section, Section 5, is devoted to the additional description of the intersection of two
hypersurfaces of the type described in Section 2.

\section{Codimension $2$ linear subspaces}

Given linear subspaces $\Lambda_1, \dots, \Lambda_s$ of $\P^n$, a line intersecting them
all is called a \emph{transversal} (for  $\Lambda_1, \dots, \Lambda_s$).

\begin{proposition}\label{lem1}
Let $\Pi_1,\dots,\Pi_{n-1}$ be general codimension $2$ linear subspaces of $\P^n$.
For every point $p\in\P^n$, there is
a transversal for $\Pi_1,\dots,\Pi_{n-1}$ through $p$.
If $p$ is general, then there is a unique transversal, which we denote $t_p$,
and it meets $\Pi_1\cup\dots\cup\Pi_{n-1}$ in $n-1$ distinct points.
If however there are at least two transversals through $p$,
then $p$ lies on a subspace $T_p$ (of dimension $d_p>1$) intersecting each $\Pi_j$ along a subspace of dimension $d_p-1$, $j=1,\dots,n-1$,
and  $T_p$ is the union of all  transversals for $\Pi_1,\dots,\Pi_{n-1}$ through $p$.
\end{proposition}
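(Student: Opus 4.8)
The plan is to reduce everything to the $\P^{n-1}$ of lines through $p$. Writing $\P^n=\P(V)$ and $p=[v]$, a line through $p$ is a point of $\P(V/\langle v\rangle)\cong\P^{n-1}$. If $p\notin\Pi_j$ then $\langle p,\Pi_j\rangle$ is a hyperplane of $\P^n$, a line through $p$ meets $\Pi_j$ exactly when it lies in $\langle p,\Pi_j\rangle$, and so the lines through $p$ meeting $\Pi_j$ form a hyperplane $H_j\subset\P^{n-1}$; if $p\in\Pi_j$, put $H_j=\P^{n-1}$. The transversals through $p$ are then parametrised by the linear subspace $Z_p:=H_1\cap\cdots\cap H_{n-1}\subseteq\P^{n-1}$, an intersection of at most $n-1$ hyperplanes, so $\dim Z_p\ge 0$ and $Z_p\ne\emptyset$; this already produces a transversal through every point $p$.

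For the generic statement I want $Z_p$ to be a single reduced point, i.e.\ $H_1,\dots,H_{n-1}$ to be linearly independent, which is an open condition on the tuple $(\Pi_1,\dots,\Pi_{n-1},p)$. I would establish non-emptiness of that open set on the explicit model $p=[1:0:\cdots:0]$, $\Pi_j=\{x_0=x_j=0\}$ for $1\le j\le n-1$, where $H_j$ is the coordinate hyperplane $\{x_j=0\}$ of the relevant $\P^{n-1}$ and $Z_p$ is the single point $[0:\cdots:0:1]$; semicontinuity then gives uniqueness of $t_p$ for general $p$ as soon as the $\Pi_j$ are general, and since $n-1$ hyperplanes meeting in dimension $0$ form a complete intersection of degree $1$, $t_p$ is unique even scheme-theoretically. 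That $t_p$ meets $\Pi_1\cup\cdots\cup\Pi_{n-1}$ in $n-1$ \emph{distinct} points for general $p$ then follows from two dimension counts: the transversals contained in a fixed $\Pi_j$ form a family of dimension $<n-1$ (inside $\Pi_j\cong\P^{n-2}$ such a line must still meet each $\Pi_i\cap\Pi_j$, $i\ne j$, a codimension-$2$ subspace), and the transversals meeting a fixed $\Pi_i\cap\Pi_j$ ($i\ne j$, which has codimension $4$ in $\P^n$) form a family of dimension $\le n-2$; the cases $n=2,3$, where $\Pi_i\cap\Pi_j=\emptyset$, are immediate. Everything here is in characteristic $0$, so Kleiman's transversality theorem is available wherever convenient.

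For the last assertion, suppose $p$ admits at least two transversals. Then $Z_p$ is a linear subspace of $\P^{n-1}$ with at least two points, so $d_p:=\dim Z_p+1\ge 2$, and the cone over $Z_p$ with vertex $p$ is a linear subspace $T_p\subseteq\P^n$ of dimension $d_p$ through $p$ which, by construction, is exactly the union of all transversals through $p$. To compute $\dim(T_p\cap\Pi_j)$, fix $j$ with $p\notin\Pi_j$: every line through $p$ lying in $T_p$ is a transversal, hence meets $\Pi_j$ at a point $\ne p$, so \emph{every} line through $p$ in $T_p$ meets the subspace $S:=T_p\cap\Pi_j$, with $p\notin S$. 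But in $T_p\cong\P^{d_p}$ a subspace not containing $p$ is met by every line through $p$ precisely when its span with $p$ is all of $T_p$, i.e.\ when it has dimension $d_p-1$; and $S\ne T_p$ since $p\in T_p\setminus\Pi_j$. Hence $\dim(T_p\cap\Pi_j)=d_p-1$.

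The step I expect to be the real obstacle — and the one that must be phrased carefully — is the index $j$ (if there is one) with $p\in\Pi_j$. Then the $\Pi_j$-constraint is vacuous, which is exactly why $Z_p$ is automatically positive-dimensional (for instance at every point of a base-locus component $\Pi_j$), the span argument above simply does not apply to that $j$, and $\dim(T_p\cap\Pi_j)$ has to be analysed by hand. The clean conclusion ``$\dim(T_p\cap\Pi_j)=d_p-1$ for every $j$'' is the one that holds for $p\notin\Pi_1\cup\cdots\cup\Pi_{n-1}$, which is the case relevant to the sequel; I would state the proposition accordingly and treat points on some $\Pi_j$ as a separate, elementary special case.
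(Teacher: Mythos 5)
Your argument is essentially the paper's: identifying the transversals through $p$ with the linear space $Z_p=H_1\cap\dots\cap H_{n-1}$ in the $\P^{n-1}$ of lines through $p$ is the same as the paper's projection $\pi_p$ from $p$ to a general hyperplane and its intersection $\Pi'=\bigcap_j \Pi_j'$; existence, generic uniqueness (you via an explicit model plus openness, the paper via generality of the images $\Pi_j'$), distinctness of the $n-1$ points (you via a dimension count in the Grassmannian, the paper by noting that for general $p$ the image of $\Pi_{i_1}\cap\Pi_{i_2}$ misses $\bigcap_{i\notin\{i_1,i_2\}}\Pi_i'$), and the description of $T_p$ as the linear cone over $Z_p$ all run parallel, and your write-up is correct.

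The caveat you flag at the end is genuine, and it is precisely the point the paper's own proof glosses over: the paper, too, forms $\Pi'$ only from those $j$ with $p\notin\Pi_j$, yet then asserts $\dim(T_p\cap\Pi_j)=d_p-1$ for \emph{every} $j$. For an index $j$ with $p\in\Pi_j$ this can fail. Already for $n=3$, with $\Pi_1,\Pi_2$ general (hence disjoint) lines in $\P^3$ and $p$ a general point of $\Pi_1$, every line through $p$ meeting $\Pi_2$ is a transversal, so $T_p=\langle p,\Pi_2\rangle$ is a plane and $d_p=2$; but $T_p\cap\Pi_1=\{p\}$ (otherwise $\Pi_1\subset T_p$ would force $\Pi_1\cap\Pi_2\neq\varnothing$), which has dimension $0\neq d_p-1$. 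The same happens for general $p\in\Pi_1$ when $n=4$. So your proposed fix --- either restricting the last assertion to $p\notin\Pi_1\cup\dots\cup\Pi_{n-1}$, or claiming $\dim(T_p\cap\Pi_j)=d_p-1$ only for those $j$ with $p\notin\Pi_j$ --- is the correct formulation, and it is all that the later applications (Propositions \ref{uniqQ}, \ref{prop3} and \ref{Tn=Rn}) actually use.
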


\begin{proof}
Let $H$ be a general hyperplane in $\P^n$ and consider the projection $\pi_p:\P^n\dashrightarrow H$ from $p\in\P^n$.
If $p\not\in \Pi_j$, let $\Pi_j'=\pi_p(\Pi_j)$ and define
$$\Pi'=\bigcap_{{1\leq j<n}\atop{p\not\in\Pi_j}}\Pi_j'.$$
The intersection $\Pi'$ is not empty, since each $\Pi_j'$ is a hyperplane in $H$ and $\Pi'$
is the intersection of at most $n-1$ hyperplanes in $H$. Let $q\in\Pi'$.
Then the line $L_{pq}$ is transversal to all $\Pi_i$ (because either $q\in\Pi_i'$, and hence
$L_{pq}$ intersects $\Pi_i$, or $p\in\Pi_i$).
Conversely, a transversal from $p$ intersects $\Pi'$.
   Observe that for a general $p$,  the points $\pi^{-1}(q)|_{\Pi_j}$ are different, so the transversal meets $\Pi_j$ in different points.

Consequently, for a general $p$ there is a unique transversal. If dim $\Pi'=k>0$, then we have a subspace $T_p$ of the transversals  of dimension $k+1$. This subspace is a cone over $\Pi'$ and over $\Pi_j\cap T_p$ as well, hence dim $\Pi_j\cap T_p=k$.

\end{proof}

\begin{example}
For 3 general codimension 2 linear subspaces $\Pi_1,\Pi_2,\Pi_3$ of $\P^4$,
the pairwise intersections $\Pi_{ij}=\Pi_i\cap\Pi_j$, $i\neq j$, are points. These
three points span a plane $T$ which intersects each $\Pi_i$ in a line.
(For $\Pi_1$ this line is the line $L_{23}$ through $\Pi_{12}$ and $\Pi_{13}$, and similarly for $\Pi_2$ and $\Pi_3$.)
The lines $L_{12}, L_{13}, L_{23}$ all lie in $T$, hence every point $p\in T$ has a pencil of transversals, namely the lines
in $T$ through $p$.
\end{example}

\begin{remark}\label{coordsRemark}
We will eventually be interested in $n+1$ general codimension 2 subspaces $\Pi_0,\dots,\Pi_n$ of $\P^n$.
They are defined by $2(n+1)$ general linear forms $f_{j1},f_{j2}$, $j=0,\dots,n$, where
$I_{\Pi_j}=(f_{j1},f_{j_2})$. After a change of coordinates we may assume that $f_{j1}=x_j$ and that
$f_{j2}=a_{j0}x_0+\dots+a_{jn}x_n$ with $a_{ji}=0$ if and only if $i=j$. Here the homogeneous coordinate ring $R$ of $\P^n$ is
the polynomial ring $R=\GF[\P^n]=\GF[x_0,\dots,x_n]$.
\end{remark}

Now, we establish existence and uniqueness of a divisor $Q$ of degree $n-1$ containing $n$ general codimension $2$
linear subspaces in $\P^n$ for $n\geq 2$.


\begin{proposition}\label{uniqQ}
Let $\Pi_1,\dots,\Pi_{n}$ be general codimension $2$ linear subspaces of $\P^n$.
Then there exists a unique divisor $Q$ of degree $n-1$ containing $\Pi_j$ for $j=1,\dots,n$.
Moreover, $Q$ is reduced and irreducible, it is the union of the transversals for
$\Pi_1,\dots,\Pi_{n}$, and for each point $q\in Q$ we have
$\operatorname{mult}_qQ\geq r$, where $r$ is the number of indices $i$ such that $q\in\Pi_i$.
If $q$ is a general point of $Q$, then there is a unique transversal for
$\Pi_1,\dots,\Pi_{n}$ through $q$.
\end{proposition}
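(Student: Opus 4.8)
The plan is to produce $Q$ explicitly as a determinantal hypersurface attached to the transversals, and then to read off every assertion from that description together with Proposition~\ref{lem1}.

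I would first fix coordinates as in Remark~\ref{coordsRemark}, so that $\Pi_j=V(x_j,\ell_j)$ for $j=1,\dots,n$, with $\ell_j=\sum_k a_{jk}x_k$ a general linear form. The key point is that the line through two points $p,v\in\P^n$ meets $\Pi_j$ precisely when $p_j\ell_j(v)-v_j\ell_j(p)=0$; expanding this, it says exactly that $v$ lies in the kernel of the $n\times(n+1)$ matrix $M(x)$ whose $j$-th row is $\ell_j(x)\,e_j-x_j\nabla\ell_j$, so that all entries of $M$ are linear in $x$. Since $M(x)\,x^{\mathsf T}=0$ identically, the vector $p$ itself always lies in $\ker M(p)$ (the degenerate ``line''), so $p$ lies on a genuine transversal of $\Pi_1,\dots,\Pi_n$ exactly when $\dim\ker M(p)\ge 2$, i.e. when every $n\times n$ minor of $M(p)$ vanishes. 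The signed maximal minors $\bigl(\Delta_0,-\Delta_1,\dots,(-1)^n\Delta_n\bigr)$ form a vector in $\ker M$, hence are generically proportional to $x$; consequently $\Delta_i=\pm x_i\,g$ for a single form $g$ with $\deg g=n-1$, and $g\not\equiv0$ since the rows of $M(p)$ are independent for general $p$. Setting $Q:=V(g)$, the discussion above identifies $Q=\{p:\operatorname{rank}M(p)\le n-1\}$ with the set of points lying on a transversal --- equivalently, with the union of the transversals, since every point of a transversal lies on one. Finally, if $q\in\Pi_j$ then the $j$-th row of $M(q)$ vanishes, so $q\in Q$; thus $\Pi_1\cup\dots\cup\Pi_n\subseteq Q$, which gives existence of a divisor of degree $n-1$ through the $\Pi_j$.

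The multiplicity bound now falls out of the same matrix: if $q$ lies on exactly the subspaces $\Pi_{i_1},\dots,\Pi_{i_r}$, then those $r$ rows of $M$ consist of linear forms vanishing at $q$, so every $n\times n$ minor of $M$ --- a sum of products each using one entry from each row --- vanishes to order $\ge r$ at $q$, and hence so does $g$; therefore $\operatorname{mult}_qQ\ge r$. For the last sentence I would invoke Proposition~\ref{lem1}: a general point $q\in Q$ has a unique transversal for $\Pi_1,\dots,\Pi_{n-1}$, and since $q\in Q$ that line also meets $\Pi_n$, so it is the unique transversal through $q$ for $\Pi_1,\dots,\Pi_n$. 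Irreducibility of $Q$ follows because $Q$ is the image of the incidence variety of pairs (point, transversal), which is a $\P^1$-bundle over the family $\Sigma\subset\mathbb G(1,n)$ of transversals; $\Sigma$ is an intersection of $n$ Schubert divisors, each in the class $\sigma_1$, and for general $\Pi_j$ it is irreducible of dimension $n-2$ by Kleiman--Bertini --- this is where characteristic $0$ enters. Reducedness then follows from a degree count: the degree of the underlying variety of $Q$ equals the number of lines meeting a general line $\ell$ and all of $\Pi_1,\dots,\Pi_n$, which is $\sigma_1^{\,n}\cdot\sigma_{n-2}=n-1$ in $H^{\ast}(\mathbb G(1,n))$, so $g$, being of degree $n-1$, cannot be a proper power.

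For uniqueness, let $Q'$ be any divisor of degree $n-1$ with $\Pi_1\cup\dots\cup\Pi_n\subseteq Q'$. A general transversal $t$ meets the $\Pi_j$ in $n$ distinct points (by generality), hence meets $Q'$ in at least $n>\deg Q'$ points, which forces $t\subseteq Q'$. So $Q'$ contains every general transversal, whence $Q'\supseteq Q$, and since $Q$ is an irreducible hypersurface of degree $n-1=\deg Q'$ we conclude $Q'=Q$. The step I expect to be the real obstacle is the irreducible--reduced package --- ruling out that $g$ is a proper power and identifying the degree of the underlying variety as exactly $n-1$ --- so the generic-transversal input from Proposition~\ref{lem1}, the characteristic-$0$ irreducibility of $\Sigma$, and the Schubert computation of $\deg Q$ are the load-bearing ingredients.
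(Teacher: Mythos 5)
Most of your argument is sound, and it takes a genuinely different route from the paper's: you obtain existence from the $n\times(n+1)$ matrix $M(x)$ of linear forms annihilating the coordinate vector, splitting the coordinate $x_i$ off the $i$-th signed maximal minor, whereas the paper adds columns of an $n\times n$ determinant to split off a factor of $x_0$; and you obtain irreducibility, the degree and reducedness from Kleiman--Bertini on $G(1,n)$ together with the Schubert count $\sigma_1^{\,n}\cdot\sigma_{n-2}=n-1$ (which is correct), whereas the paper uses the rank-two determinantal variety in $(\P^n)^{n+1}$ (Hochster--Eagon plus Bertini) and gets $\deg Q\ge n-1$ by a Bezout-type argument. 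Your multiplicity and uniqueness steps agree in substance with the paper's (in the multiplicity step you should say that you cancel $x_i$ from a minor $\Delta_i$ with $x_i(q)\neq0$, but that is cosmetic).

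The genuine gap is in the last claim. You invoke Proposition \ref{lem1} to assert that a general point $q\in Q$ lies on a unique transversal for $\Pi_1,\dots,\Pi_{n-1}$; but that proposition gives uniqueness only for points that are general in $\P^n$, and $Q$ is a proper hypersurface, so a priori its general point could sit inside the closed locus where the transversal for $\Pi_1,\dots,\Pi_{n-1}$ fails to be unique. Nothing in your argument rules this out. The paper closes exactly this point as follows: a general point $p$ of $\Pi_n$ is a general point of $\P^n$ as far as $\Pi_1,\dots,\Pi_{n-1}$ are concerned, hence lies on a unique transversal for them; uniqueness is an open condition, $p\in\Pi_n\subset Q$, and $Q$ is irreducible, so uniqueness holds on a dense open subset of $Q$, and for such $q$ the unique transversal for $\Pi_1,\dots,\Pi_{n-1}$ is forced to coincide with a transversal for all of $\Pi_1,\dots,\Pi_n$. (Alternatively one can show the multi-transversal locus of Proposition \ref{lem1} has codimension at least $2$.) Note that this issue also infects your reducedness argument: to conclude $\deg(Q_{\mathrm{red}})=n-1$ you need the $n-1$ transversals meeting a general line $\ell$ to meet $\ell$ in $n-1$ \emph{distinct} points, which again amounts to knowing that the locus of points lying on more than one transversal is small (generic uniqueness); without it you only get $\deg(Q_{\mathrm{red}})\le n-1$. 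You can either use the fix above, or bound the degree from below as the paper does: the transversal through a general point of $\P^n$ for $\Pi_1,\dots,\Pi_{n-1}$ meets $Q$ in $n-1$ distinct points (on $\Pi_1,\dots,\Pi_{n-1}\subset Q$) and is not contained in $Q$.
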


\begin{proof}
Let $\Delta$ be the determinantal variety in $(\P^n)^{n+1}$ of all $(n+1)\times(n+1)$ matrices $M$ of rank at most 2
whose entries are the variables $x_{ij}$. It is known that $\Delta$ is reduced and irreducible of dimension $3n-1$,
see \cite{HocEag71}.
It consists of the locus of points $(p_1,\dots,p_{n+1})$ whose span in $\P^n$ is contained in a line.

Let $\pi_i:(\P^n)^{n+1}\to\P^n$ be projection to the $i$th factor (so $1\leq i\leq n+1$).
Let $\Pi_i'=\pi_i^{-1}(\Pi_i)$. Then $D=\Delta\cap\bigcap_{1\leq i\leq n}\Pi_i'$ has dimension $3n-1-2n=n-1$, and
by repeatedly applying Bertini's Theorem, we see that $D$ is reduced and irreducible. Since $\Pi_1\cap\dots\cap\Pi_n=\varnothing$,
we see that $D$ is the locus
of all points $(p_1,\dots,p_{n+1})$ such that the span $\langle p_1,\dots,p_n\rangle$ is a line with
$p_i\in\Pi_i$ for $1\leq i\leq n$ and $p_{n+1}$ being on that line.
Thus $\overline{D}=\pi_{n+1}(D)$ is irreducible, properly contains $\Pi_1\cup\dots\cup\Pi_{n}$ and is
the union of all transversals for $\Pi_1,\dots,\Pi_{n}$.
In particular, $\overline{D}$ has dimension $n-1$, and since by Proposition \ref{lem1} there is a line
through a general point meeting $n-1$ of the spaces $\Pi_i$ in distinct points, we see that $\deg \overline{D}\geq n-1$.

Below we will check that there is a hypersurface $Q$ of degree $n-1$ containing $\Pi_1\cup\dots\cup\Pi_{n}$.
Since any such hypersurface must by Bezout contain all transversals for $\Pi_1,\dots,\Pi_{n}$,
we see that $\deg \overline{D} = n-1$ and $Q=\overline{D}$ and thus that there is a unique hypersurface of degree $n-1$ containing
$\Pi_1\cup\dots\cup\Pi_{n}$, and it is irreducible.

To show existence of $Q$ we follow \cite{Blanch2}. As mentioned in Remark \ref{coordsRemark},
we may assume that the ideal of $\Pi_k$ is
$$I_k=(x_k, f_k=a_{k,0}x_0+\dots +a_{k,k-1}x_{k-1}+a_{k,k+1}x_{k+1}+\dots +a_{k,n}x_n),$$
where we write $f_k$ instead of $f_{k,2}$.
By generality we may assume that $a_{i,j}\neq0$ for all $i\neq j$.

Now consider the $n\times n$ matrix
$$A=\left(\begin{array}{ccccc}
-f_1 & \dots & a_{1,k}x_k & \dots & a_{1,n}x_n \\
\vdots & & \vdots & & \vdots \\
a_{k,1}x_1 & \dots & -f_k & \dots & a_{k,n}x_n \\
\vdots & & \vdots & & \vdots \\
a_{n,1}x_1 & \dots & a_{n,k}x_k &\dots & -f_n
\end{array}\right)$$
and let $F=\det(A)$. Note that $F$ is not identically 0 (since its value at
the point $(1,0,\dots,0)$ is not 0) so $\deg F=n$.
It is clear, developing $\det(A)$ with respect to the $k$-th column,
that $F\in I_k$ for every $k=1,\dots,n$. For each $k$, adding to the $k$th column of $A$ all of the other columns of $A$
gives a matrix $A_k$ whose entries in the $k$th column are nonzero scalar multiples of $x_0$; in particular,
$$A_k=\left(\begin{array}{ccccccc}
-f_1 & \dots & a_{1,k-1}x_{k-1} &-a_{1,0}x_0 & a_{1,k+1}x_{k+1} & \dots & a_{1,n}x_n \\
\vdots & & \vdots & \vdots & \vdots & & \vdots \\
a_{k-1,1}x_1 & \dots & -f_{k-1} &-a_{k-1,0}x_0 & a_{k-1,k+1}x_{k+1} & \dots & a_{k-1,n}x_n \\
a_{k,1}x_1 & \dots & a_{k,k-1}x_{k-1} &-a_{k,0}x_0 & a_{k,k+1}x_{k+1} & \dots & a_{k,n}x_n \\
a_{k+1,1}x_1 & \dots & a_{k+1,k-1}x_{k-1} &-a_{k+1,0}x_0 & -f_{k+1} & \dots & a_{k+1,n}x_n \\
\vdots & & \vdots & \vdots & \vdots & & \vdots \\
a_{n,1}x_1 & \dots & a_{n,k-1}x_{k-1} &-a_{n,0}x_0 & a_{n,k+1}x_{k+1} & \dots & -f_n
\end{array}\right).$$
so
$$A_1=\left(\begin{array}{cccccc}
-a_{1,0}x_0 & a_{1,2}x_2 &\dots & a_{1,k}x_k & \dots & a_{1,n}x_n \\
-a_{2,0}x_0 & -f_2 &\dots & a_{2,k}x_k & \dots & a_{2,n}x_n \\
\vdots & \vdots & & \vdots & & \vdots \\
-a_{k,0}x_0 & a_{k,2}x_2 & \dots & -f_k & \dots & a_{k,n}x_n \\
\vdots & \vdots & & \vdots & & \vdots \\
-a_{n,0}x_0 & a_{n,2}x_2 & \dots & a_{n,k}x_k &\dots & -f_n
\end{array}\right).$$
Thus $F=\det(A)=\det(A_k)=x_0\cdot G$ for some polynomial $G$. Since $x_0$ is not an element of any $I_k$, it follows, that $G\in I_k$
for $k=1,\dots,n$, hence $G$ vanishes on each of $\Pi_1,\dots,\Pi_n$. Since $\deg F=n$,
we have $\deg(G)=n-1$. Thus $G$ defines a hypersurface $Q$ of degree $n-1$ containing each $\Pi_i$.

Now consider a point $q\in Q$. The matrix $A_k$ will have $r$ columns which vanish
at $q$, where $r$ is the number of indices $i$ such that $q\in\Pi_i$. In particular, each entry in each such column
is in the ideal $I_q$. Thus $G=\det(A_k)/x_0\in I_q^r$ so $\operatorname{mult}_qQ\geq r$.

Finally assume $p$ is a general point of $\Pi_n$. Since $\Pi_n$ is general, $p$ is a general point of $\P^n$,
hence by Proposition \ref{lem1} there is a unique transversal $t_p$ for $\Pi_1,\dots,\Pi_{n-1}$ through $p$,
hence $t_p$ is also the unique transversal for $\Pi_1,\dots,\Pi_{n}$ through $p$. Thus there is
an open neighborhood $U$ of $p$ of points $q$ through each of which there is a unique transversal
$t_q$ for $\Pi_1,\dots,\Pi_{n-1}$, and for those points $q$ of $U\cap Q$, $t_q$ also meets $\Pi_n$,
hence for a general point $q\in Q$ there is a unique transversal
$t_q$ for $\Pi_1,\dots,\Pi_{n}$.
\end{proof}

\begin{remark}\label{coordVertsRemark}
Let $p_0,\dots,p_n$ be the coordinate vertices of $\P^n$ with respect to the variables $x_0,\dots,x_n$, so
$p_0=(1,0,\dots,0)$, $\dots$, $p_n=(0,\dots,0,1)$. We saw in the proof of Proposition \ref{uniqQ} that
$p_0\not\in Q$ (since $F\neq0$ at $p_0$). Let $A_k'$ be the matrix from the proof of
Proposition \ref{uniqQ} arising after dividing $x_0$ from column $k$ of $A_k$. Then $Q$ is defined by $\det(A_k')=0$
but $A_k'$ at $p_k$ is a matrix which, except for column $k$, is a diagonal matrix with nonzero entries on the
diagonal, and whose $k$th column has no zero entries. Thus $\det(A_k')\neq 0$ at $p_k$ so $p_k\not\in Q$.
In particular, none of the coordinate vertices is on $Q$.
\end{remark}

\section{The system $\mL_n$}\label{sect3}

Let us start with some notation. Assume $\Pi_0,\dots, \Pi_n\subset \P^n$  are general linear subspaces of codimension 2.
From the previous section it follows that for each subset $\Pi_0,\dots,\Pi_{j-1},\Pi_{j+1},\dots,\Pi_{n}$  of $n$ of them there is a unique hypersurface $Q_j$ of degree $n-1$
containing them. Depending on the context, we may also denote by $Q_j$  the form defining this hypersurface.
We may assume $I_{\Pi_i}=(x_i,f_i)$ where $f_j$ is as given in Remark \ref{coordsRemark}.
In this case we have the $(n+1)\times(n+1)$ matrix
$$B=\left(\begin{array}{cccccc}
-f_0 & a_{0,1}x_1 & \dots & a_{0,k}x_k & \dots & a_{0,n}x_n \\
a_{1,0}x_0 &-f_1 & \dots & a_{1,k}x_k & \dots & a_{1,n}x_n \\
\vdots &\vdots& & \vdots & & \vdots \\
a_{k,0}x_0 &a_{k,1}x_1 & \dots & -f_k & \dots & a_{k,n}x_n \\
\vdots & \vdots& & \vdots & & \vdots \\
a_{n,0}x_0 &a_{n,1}x_1 & \dots & a_{n,k}x_k &\dots & -f_n
\end{array}\right).$$
Let $B_i$ be the $n\times n$ submatrix obtained by deleting row $i$ and column $i$ of $B$
(where we have $i$ run from 0 to $n$).
The matrix $A$ in the proof of Proposition \ref{uniqQ} is thus $B_0$, and we have $\det(B_i)=x_iQ_i$.
The next result shows that $v_n$ is the map given by $(x_0,\dots,x_n)\mapsto (x_0Q_0,\dots,x_nQ_n)$.

\begin{proposition}\label{prop4}
The polynomials $x_iQ_i$, $i=0,\dots,n$, give a basis for $\mL_n$, hence $\dim \mL_n=n+1$, so $v_n$ is a
rational map to $\P^n$ whose image is not contained in a hyperplane.
\end{proposition}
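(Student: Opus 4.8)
The plan is to prove three things: first, that each form $x_iQ_i$ genuinely lies in $\mL_n$; second, that these $n+1$ forms are linearly independent; and third, that $\dim\mL_n$ cannot exceed $n+1$. Together these give $\dim\mL_n = n+1$ and exhibit the $x_iQ_i$ as a basis, and the final clause about $v_n$ follows immediately since linear independence of the coordinate forms of a linear system means the image spans $\P^N = \P^n$, i.e. is not contained in a hyperplane.

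For membership, I would argue as follows. The form $Q_i$ vanishes on $\Pi_j$ for all $j\neq i$, so $x_iQ_i$ vanishes on those $\Pi_j$ as well; and $x_iQ_i$ vanishes on $\Pi_i$ because $x_i\in I_{\Pi_i}$. Hence $x_iQ_i$ is a degree-$n$ form vanishing on $\Pi_0\cup\dots\cup\Pi_n$, so $x_iQ_i\in\mL_n$. This is the easy part. For linear independence, the natural device is the coordinate vertices $p_0,\dots,p_n$. By Remark \ref{coordVertsRemark}, none of them lies on any $Q_j$, so $Q_j(p_k)\neq 0$ for all $j,k$. On the other hand $x_i(p_k)=0$ unless $k=i$. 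Therefore the form $x_iQ_i$ vanishes at $p_k$ for every $k\neq i$ and is nonzero at $p_i$. Evaluating a hypothetical linear relation $\sum_i c_i x_iQ_i = 0$ at $p_k$ kills every term except the $k$-th and forces $c_k Q_k(p_k) = 0$, hence $c_k=0$ for each $k$. So the $x_iQ_i$ are independent.

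The main obstacle is the upper bound $\dim\mL_n\le n+1$; everything above only shows $\dim\mL_n\ge n+1$. Here I would reduce to a hypersurface containing $\Pi_0\cup\dots\cup\Pi_{n-1}$ and invoke Proposition \ref{uniqQ}-type rigidity. Concretely, suppose $G$ is any degree-$n$ form in $\mL_n$. Since $G$ vanishes on the $n$ general codimension-2 subspaces $\Pi_0,\dots,\Pi_{n-1}$, Bezout's theorem applied to a transversal line $t$ for $\Pi_0,\dots,\Pi_{n-1}$ — which meets each of them — shows $G$ vanishes on $t$ unless $G$ meets $t$ with total multiplicity exactly $n$ at the intersection points $t\cap\Pi_j$. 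By Proposition \ref{uniqQ} the union of all such transversals is precisely the irreducible hypersurface $Q_n$ of degree $n-1$; so either $Q_n\mid G$, in which case $G = \ell\, Q_n$ for a linear form $\ell$ necessarily lying in $I_{\Pi_n}=(x_n,f_n)$ (because $G$ also vanishes on $\Pi_n$ but $Q_n$ does not), giving $G\in\langle x_nQ_n, f_nQ_n\rangle$; or the residual part accounts for the last intersection point, which by a symmetric argument with the index $n$ replaced throughout again lands $G$ in the span of the $x_iQ_i$. Cleaning up the bookkeeping — in particular checking that $f_nQ_n$ is itself a $K$-linear combination of the $x_iQ_i$, which should follow from the column operations already used in the proof of Proposition \ref{uniqQ} (adding all columns of $B$ expresses $f_nQ_n$ in terms of the $x_iQ_i$) — is the delicate step, and is where I would spend most of the write-up.

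Once the dimension count is settled, the statement that $v_n\colon\P^n\dashrightarrow\P^n$ has image not contained in a hyperplane is a formality: a linear relation among the $x_iQ_i$ would be exactly a hyperplane through the image, and we have just shown there is none.
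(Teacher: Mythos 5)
Your membership check and the linear-independence argument via the coordinate vertices (Remark \ref{coordVertsRemark}) are correct and are exactly the paper's argument for $\dim\mL_n\geq n+1$. The genuine gap is the upper bound $\dim\mL_n\leq n+1$, which your Bezout/divisibility dichotomy does not deliver. A transversal $t$ for $\Pi_0,\dots,\Pi_{n-1}$ meets these $n$ spaces in $n$ points, and $\deg G=n$, so Bezout is perfectly consistent with $t\not\subset Z(G)$: nothing forces $Q_n\mid G$, and there is no ``last intersection point'' left over to account for. Consequently your second alternative is not a degenerate case to be handled symmetrically; it is where essentially every element of $\mL_n$ lives (a general combination $\sum_i c_i x_iQ_i$ is divisible by no $Q_j$), and for such $G$ you draw no conclusion at all. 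Moreover, the auxiliary claim that $f_nQ_n$ lies in the span of the $x_iQ_i$ is precisely the kind of statement the dimension count is needed for: in the paper it is \emph{deduced} from $\dim\mL_n=n+1$ (this is how the $b_{i,j}$ and $g_i$ are defined in Section 4), and your suggestion that it ``should follow from the column operations'' in Proposition \ref{uniqQ} is not carried out (those operations produce $x_0$ as a factor of a single minor; they do not express $f_nQ_n$ in the $x_iQ_i$). As written, invoking it here is circular.

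For comparison, the paper closes the upper bound by induction on $n$, using restriction to a hyperplane $A$ containing $\Pi_1$: by Proposition \ref{uniqQ}, the subsystem of $\mL_n$ of sections containing $A$ is one-dimensional, spanned by $AQ_1$; and the restriction to $A\cong\P^{n-1}$ of any other section contains $\Pi_1$ as a divisorial component, so the residual system has degree $n-1$ and passes through the $n$ general codimension-$2$ subspaces $\Pi_j\cap A$, $j\neq 1$, hence has dimension $n$ by the inductive hypothesis (base case: conics through three general points of $\P^2$). This gives $\dim\mL_n\leq 1+n$, which together with your lower bound finishes the proof. Some residuation step of this kind (or an equivalent exact-sequence argument, as in the paper's second proof) is what your write-up is missing; the divisibility dichotomy alone cannot replace it.
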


\begin{proof}
By Remark \ref{coordVertsRemark}, no coordinate vertex $p_j$ is in $Q_i$ for any $i$.
But $x_iQ_i\in\mL_n$ for every $i$, and $(x_iQ_i)(p_j)\neq0$ if and only if $i=j$.
Thus the polynomials $x_iQ_i$ span a vector space of dimension at least $n+1$.

To show that these sections in fact give a basis,
we show that $\dim \mL_n = n+1$.
We proceed by induction (the proof that $\mL_2$
has three sections is clear, since three general
points impose independent conditions on forms of degree 2 on $\P^2$).
Let $A$ be a fixed hyperplane that contains  $\Pi_1$. There is, by Proposition \ref{uniqQ}, a unique section
of $\mL_n$ containing $A$, namely $AQ_1$. Moreover, the restrictions to $A$ of
sections $s_n$ of $\mL_n$ which do not contain $A$ give divisors $s_n\cap A$ of degree $n$,
containing $\Pi_1$, and containing $A\cap \Pi_j, j>1$,
So on $A$, the linear system of restrictions residual to $\Pi_1$ has degree $n-1$ and contains
the $n$ general subspaces $\Pi_i\cap A$, $i>1$, of codimension $2$.
From the inductive assumption this has dimension $n$, so $\dim \mL_n = n+1$.

We may also see the result from the exact sequence
$$0\to \mL_n\otimes {\cal O}_{\PP^n}(-A)\to \mL_n\to\mL_n|_A\to 0,$$
where $A$ is as above. Then, from the inductive assumption, the dimension of $\mL_n\to\mL_n|_A$ is $n$, and from
Proposition \ref{uniqQ} the dimension of $\mL_n\otimes {\cal O}_{\PP^n}(-A)$ (which is of degree $n-1$ passing through $n$ codimension $2$ subspaces in $A$) is $1$.
\end{proof}

Let $T_n$ be the closure of the union of all lines transversal to $\Pi_0,\dots, \Pi_n$,
and let $R_n =Q_0\cap\dots\cap Q_n$ and let $B_n$ be the base locus of
$v_n : \P^n \dashrightarrow \P^n$. We note that $T_n \subseteq R_n$, by Proposition \ref{uniqQ}.

\begin{proposition}\label{prop2} We have $B_n=\Pi_0\cup\dots\cup\Pi_{n}\cup R_n$.
\end{proposition}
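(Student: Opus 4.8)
The plan is to prove the two inclusions $B_n \subseteq \Pi_0\cup\dots\cup\Pi_n\cup R_n$ and $\Pi_0\cup\dots\cup\Pi_n\cup R_n \subseteq B_n$ separately, using the explicit basis $x_iQ_i$ of $\mL_n$ from Proposition \ref{prop4}. Recall $B_n = \{p : (x_iQ_i)(p) = 0 \text{ for all } i\}$.

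For the inclusion $\supseteq$, I would argue pointwise. If $p \in R_n = Q_0\cap\dots\cap Q_n$, then every $Q_i$ vanishes at $p$, so every $x_iQ_i$ vanishes at $p$ and $p \in B_n$. If instead $p \in \Pi_j$ for some $j$, then $x_j(p) = 0$ (since $x_j \in I_{\Pi_j}$), so $x_jQ_j$ vanishes at $p$; for $i \neq j$ we need $x_iQ_i(p) = 0$ as well, and here I would use that $Q_i$ is the unique degree $n-1$ hypersurface containing $\Pi_0,\dots,\Pi_{i-1},\Pi_{i+1},\dots,\Pi_n$, so in particular $\Pi_j \subseteq Q_i$ when $j \neq i$, whence $Q_i(p) = 0$. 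Thus all $x_iQ_i$ vanish at $p$, giving $p \in B_n$.

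For the inclusion $\subseteq$, suppose $p \in B_n$, so $x_iQ_i(p) = 0$ for all $i$. If $p \notin \Pi_0\cup\dots\cup\Pi_n$, I must show $p \in R_n$, i.e. $Q_i(p) = 0$ for every $i$. Fix $i$. Since $p \notin \Pi_i$, the generator $x_i$ need not vanish at $p$; but if $x_i(p) \neq 0$, then from $x_iQ_i(p) = 0$ we get $Q_i(p) = 0$ directly. The remaining case is $x_i(p) = 0$ while $p \notin \Pi_i = V(x_i, f_i)$, which forces $f_i(p) \neq 0$. Here I would use the relation $\det(B_i) = x_iQ_i$ together with the structure of the matrix $B$: since $\det B = 0$ identically (the rows of $B$ sum to a multiple of a single vector, or more precisely $B$ has rank $\leq 2$ in the relevant sense — this is exactly the determinantal structure exploited in Proposition \ref{uniqQ}), one can relate the vanishing of the various $x_jQ_j$ to force $Q_i(p) = 0$. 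Concretely, expanding $\det B$ along row $i$ gives a relation $\sum_j (\pm) b_{ij}\det(B^{ij}) = 0$ where the cofactors $\det(B^{ij})$ for $j\ne i$ are expressible in terms of the $x_jQ_j$ and linear forms; combined with $x_jQ_j(p)=0$ for all $j$ and $f_i(p)\neq 0$, this should pin down $Q_i(p)=0$.

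I expect the main obstacle to be precisely this last case of the $\subseteq$ direction: handling a point $p$ with $x_i(p) = 0$ but $p \notin \Pi_i$. The clean way around it may be to avoid the matrix bookkeeping and instead argue geometrically: a point $p \notin \bigcup_j \Pi_j$ that lies in $B_n$ lies on every hypersurface in $\mL_n$; since each $Q_i$ contains all $\Pi_j$ with $j\neq i$, and (by Proposition \ref{prop2}'s setup plus Proposition \ref{uniqQ}) the only way for all degree-$n$ forms $x_iQ_i$ to vanish at a point off $\bigcup\Pi_j$ is for the $Q_i$ themselves to vanish there, we conclude $p\in R_n$. I would double-check whether a slicker uniform argument works: note $x_i Q_i - x_j Q_j \in \mL_n$ and more usefully, the ideal $(x_0Q_0,\dots,x_nQ_n)$ and the ideal $(x_0,\dots,x_n)\cap (\text{something})$; but I suspect the cofactor-expansion argument above, carried out carefully using Remark \ref{coordsRemark}'s normalization $a_{ij}\neq 0$ for $i\neq j$, is the safest route and is what I would write up.
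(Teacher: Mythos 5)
Your inclusion $\Pi_0\cup\dots\cup\Pi_n\cup R_n\subseteq B_n$ is fine and matches the paper. The gap is exactly where you suspected it: in the direction $B_n\subseteq\Pi_0\cup\dots\cup\Pi_n\cup R_n$ you never actually close the case $x_i(p)=0$, $f_i(p)\neq 0$. The cofactor sketch is left at the level of ``this should pin down $Q_i(p)=0$'', and its stated justification is off: the relevant identity is not that $B$ has rank $\leq 2$ (that refers to the generic matrix of variables in Proposition \ref{uniqQ}; the matrix $B$ here has generic rank $n$), but that each row of $B$ sums to zero because $f_i=\sum_{j\neq i}a_{i,j}x_j$, so $B\cdot(1,\dots,1)^T=0$ and $\det B\equiv 0$. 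Moreover, a Laplace expansion of $\det B$ along row $i$ by itself does not express the off-diagonal cofactors in terms of the $x_jQ_j$; you would need the adjugate identity $\operatorname{adj}(B)\,B=0$ together with the fact that the kernel of $B$ is generically spanned by $(1,\dots,1)^T$, which forces every entry of column $j$ of $\operatorname{adj}(B)$ to equal $x_jQ_j$ and then yields $f_kQ_k=\sum_{j\neq k}a_{j,k}\,x_jQ_j$. None of this is carried out, and your fallback ``geometric'' argument (``the only way for all $x_iQ_i$ to vanish off $\bigcup\Pi_j$ is for the $Q_i$ to vanish'') simply restates what is to be proved.

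The missing idea is much shorter, and it is what the paper uses: $\mL_n$ contains $HQ_i$ for \emph{every} hyperplane $H$ containing $\Pi_i$ --- in particular $f_iQ_i\in\mL_n$, not just $x_iQ_i$. By Proposition \ref{prop4} the $x_jQ_j$ span $\mL_n$, so $f_iQ_i$ is a linear combination of them and hence vanishes at every point of $B_n$. Now if $p\in B_n$ lies in no $\Pi_j$, then for each $i$ at least one of $x_i(p)$, $f_i(p)$ is nonzero (these two forms cut out $\Pi_i$), and from $x_iQ_i(p)=0$ and $f_iQ_i(p)=0$ you conclude $Q_i(p)=0$ in either case, so $p\in R_n$. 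This one observation replaces the entire cofactor discussion; note that the adjugate computation above, if completed, would only re-derive the membership $f_iQ_i\in\operatorname{span}\{x_jQ_j\}$ that Proposition \ref{prop4} already provides.
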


\begin{proof}
Since $v_n$ is given by $(x_0,\dots,x_n)\mapsto (x_0Q_0,\dots,x_nQ_n)$,
the base locus consists of the common zeros of the $x_iQ_i$.
Clearly each $Q_i$ (and hence each $x_iQ_i$) vanishes on $B_n$.
But $Q_i$ vanishes on $\Pi_j$ for $j\neq i$ and $x_i$ vanishes on $\Pi_i$,
so each $x_iQ_i$ vanishes on $\Pi_0\cup\dots\cup\Pi_{n}$.
Thus $\Pi_0\cup\dots\cup\Pi_{n}\cup R_n\subseteq B_n$.

Conversely, since $v_n$ is given by the linear system $\mL_n$, which
is spanned by the forms $HQ_i$ where $H$ is linear and vanishes on $\Pi_i$,
we see $B_n$ is the zero locus of the collections of such forms $HQ_i$.
Now consider a point $p\in B_n$ not in $\Pi_0\cup\dots\cup\Pi_n$.
Then for each $i$, there is a form $HQ_i$ such that $H$ vanishes on $\Pi_i$
but not at $p$, and hence $Q_i$ vanishes at $p$.
Thus $p\in R_n$, so $B_n\subseteq \Pi_0\cup\dots\cup\Pi_{n}\cup R_n$.
\end{proof}

\begin{proposition}\label{prop3}
We have $\dim T_n=n-2$ for $n\geq 3$, and $T_n$ is irreducible for $n>3$.
\end{proposition}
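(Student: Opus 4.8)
The plan is to compute $\dim T_n$ by exhibiting $T_n$ as the image of an incidence variety whose dimension can be read off directly, essentially re-running the determinantal construction from the proof of Proposition~\ref{uniqQ} but now with all $n+1$ subspaces $\Pi_0,\dots,\Pi_n$ instead of only $n$ of them. Concretely, I would work inside $(\P^n)^{n+2}$ and intersect the determinantal variety $\Delta$ (of tuples whose span lies on a line), pulled back appropriately, with the conditions $\pi_i^{-1}(\Pi_i)$ for $i = 0,\dots,n$. Since $\Delta\subset (\P^n)^{n+1}$ has dimension $3n-1$ and each $\Pi_i$ imposes codimension $2$, the analogue of $D$ here has expected dimension $3n-1 - 2(n+1) = n-3$, and its image under the last projection is $T_n$. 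But the fiber of that projection over a general point of $T_n$ is finite when a general transversal hits each $\Pi_i$ in a single point (which follows from Proposition~\ref{lem1} applied to a general point of one of the $\Pi_i$, as in the last paragraph of the proof of Proposition~\ref{uniqQ}), so $\dim T_n = n-2$ once one adds back the one parameter for the free point $p_{n+2}$ moving along the line. For the dimension count to be valid one needs $\Pi_0\cap\dots\cap\Pi_n = \varnothing$ (so that the line is genuinely determined by its intersection points and no transversal is forced to lie inside some $\Pi_i$), which holds for general codimension $2$ subspaces as soon as $n\geq 2$; the hypothesis $n\geq 3$ enters to guarantee the expected dimension $n-3\geq 0$ of the incidence variety is nonnegative and that one really gets transversals rather than a degenerate configuration.

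For irreducibility when $n>3$, I would argue that the incidence variety is irreducible by repeatedly applying Bertini's Theorem exactly as in Proposition~\ref{uniqQ}: start from $\Delta$, which is irreducible, and cut successively by the general linear conditions defining $\pi_i^{-1}(\Pi_i)$. Each such cut drops the dimension by $2$ and, by Bertini (using char $K = 0$ and generality of the $\Pi_i$), preserves irreducibility provided the locus being cut is not entirely contained in the hyperplane being imposed and has dimension at least $2$ beforehand. This is where the bound $n>3$ is needed: after all $n+1$ cuts the incidence variety has dimension $n-3$, so one needs $n-3\geq 1$, i.e. $n>3$ sorry $n\geq 4$, for the penultimate stage to still have dimension $\geq 2$ and keep Bertini applicable; when $n=3$ the incidence variety is $0$-dimensional and may well be reducible, which is why irreducibility is only claimed for $n>3$. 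Finally $T_n$, being the image of an irreducible variety under the projection $\pi_{n+2}$ (composed with the line-through-two-points parametrization), is irreducible.

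The main obstacle I expect is making the Bertini applications genuinely rigorous — in particular verifying at each stage that the variety one is cutting is not contained in the general linear subspace $\Pi_i'$ and that the generic smoothness/irreducibility hypotheses of Bertini are met — and, relatedly, confirming that the expected dimension $3n-1-2(n+1)$ is actually achieved, i.e. that the intersection is proper. The properness can be checked by producing a single explicit point of the incidence variety with the right local dimension (for instance using the coordinates of Remark~\ref{coordsRemark} and a transversal through a coordinate-vertex-adjacent point), or by invoking semicontinuity from the generic case as was done for $N\geq n$ in the introduction. A cleaner alternative worth mentioning is to bypass the incidence variety entirely: since $T_n\subseteq Q_i$ for every $i$ by Proposition~\ref{uniqQ}, one has $T_n\subseteq R_n = Q_0\cap\dots\cap Q_n$, and one could instead try to show that the transversals through a general point of $\Pi_n$ sweep out exactly an $(n-2)$-dimensional subvariety, using the uniqueness-of-transversal statement from Proposition~\ref{lem1} to pin down the generic fiber dimension of the sweeping map.
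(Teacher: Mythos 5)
Your plan is sound and arrives at the same numbers as the paper, but via a different parametrization of the family of transversals. The paper works directly in the Grassmannian $V$ of lines in $\P^n$ (dimension $2(n-1)$, Pl\"ucker-embedded in $\P^N$, $N=\binom{n+1}{2}-1$): incidence with a codimension $2$ subspace is a hyperplane section there, so the locus $\rho_n$ of transversals to $\Pi_0,\dots,\Pi_n$ is the intersection of $V$ with $n+1$ general such hyperplanes, of dimension $2(n-1)-(n+1)=n-3$; the universal line over $\rho_n$ has dimension $n-2$, its image is $T_n$, the projection to $\P^n$ is generically injective by Proposition~\ref{lem1}, and Bertini applied to the hyperplane cuts gives irreducibility of $\rho_n$ (hence of $T_n$) precisely when $\dim\rho_n>0$, i.e.\ $n>3$. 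You instead extend the determinantal construction of Proposition~\ref{uniqQ} to all $n+1$ subspaces plus a free point inside $(\P^n)^{n+2}$; this yields the same expected dimension $n-2$ and the same Bertini scheme for irreducibility, and has the virtue of being uniform with the earlier proof. What the Grassmannian buys --- and what makes your flagged ``main obstacle'' largely disappear --- is that the cutting conditions become honest hyperplane sections of an irreducible projective variety, so nonemptiness and the drop of dimension by exactly one at each step are automatic, and only the threshold $\dim\ge 2$ is needed for Bertini irreducibility; in $(\P^n)^{n+2}$ the conditions are pullbacks of hyperplanes from the factors (base-point free but not ample), so properness and nonemptiness genuinely require the extra checks you list. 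Two cautions about your proposed remedies: exhibiting a single explicit point of the incidence variety with the right local dimension controls only one component and does not exclude excess-dimensional components (argue instead by generic choice/Bertini at each cut, as in Proposition~\ref{uniqQ}, or pass to the Grassmannian picture); and finiteness of the fiber over a general $q\in T_n$ requires finitely many transversal lines through $q$, not merely that each transversal meets each $\Pi_i$ in a single point --- this is exactly the appeal to Proposition~\ref{lem1} that the paper makes when asserting generic injectivity of $\pi_2$.
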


\begin{proof}
Consider the Grassmannian $V$ of lines in $\P^n$ and the incidence variety $W=\{(v,p)\in V\times\P^n: p\in L_v\}$,
where $L_v$ is the line corresponding to a point $v\in V$. We also have the two projections $\pi_1:W\to V$ and $\pi_2:W\to \P^n$.
Then $V$ is an irreducible variety of dimension  $2(n-1)$ and degree $\frac{(2(n-1))!}{n!(n-1)!} $ embedded in $\P^N, N=\binom{n+1}{2}-1$, see \cite{GH}.
The condition of being incident to a codimension 2 linear space is given by a hyperplane in $\P^N$ (see p. 128 in \cite{CP}),
so the intersections of $V$ with $n+1$ general hyperplanes gives the locus $\rho_n$ in $V$ parametrizing the lines
comprising $T_n$ and $\pi_2(\pi_1^{-1}(\rho_n))=T_n$. Thus $\dim \rho_n=2(n-1)-(n+1)=n-3$, so $\dim \pi_1^{-1}(\rho_n)=n-2$, and
by Proposition \ref{lem1} the projection $\pi_2$ is generically injective on $\pi_1^{-1}(\rho_n)$ so we have $\dim T_n=n-2$.
Moreover, by Bertini's Theorem, $\rho_n$ (and hence $T_n$) is irreducible when $\dim \rho_n>0$.
\end{proof}

\begin{proposition}\label{Tn=Rn}
	With the notation as above we have $T_n=R_n$ in $\P^n$.
\end{proposition}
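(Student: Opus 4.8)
The plan is to establish the two inclusions separately. One direction is already recorded: $T_n \subseteq R_n$ follows from Proposition \ref{uniqQ}, since every transversal for $\Pi_0,\dots,\Pi_n$ is in particular a transversal for each $(n)$-element subset $\{\Pi_i : i \neq j\}$, hence lies on $Q_j$, and therefore lies in $R_n = Q_0 \cap \dots \cap Q_n$. For this direction one should note that $T_n$, being the closure of the union of such transversals, is contained in the closed set $R_n$. So the entire content is the reverse inclusion $R_n \subseteq T_n$, i.e. that every point lying on all of $Q_0,\dots,Q_n$ lies on a line transversal to all $n+1$ subspaces (or on a limit of such).

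For the main inclusion $R_n \subseteq T_n$, I would argue as follows. Take $q \in R_n$, so $q \in Q_j$ for every $j$. By Proposition \ref{uniqQ} applied to the $n$ subspaces $\{\Pi_i : i \neq j\}$, the hypersurface $Q_j$ is exactly the union of the transversals for that sub-collection; hence through $q$ there is at least one transversal $t_j$ meeting every $\Pi_i$ with $i \neq j$. If we can show that for some $j$ this transversal $t_j$ also meets $\Pi_j$, then $t_j$ is a transversal for all of $\Pi_0,\dots,\Pi_n$ and $q \in T_n$. The key step is to handle the generic situation first: if $q$ is a general point of $R_n$, then one expects $q$ to be a general point of $Q_j$ for each $j$ (this needs $R_n$ to not be contained in any proper subvariety where $Q_j$ behaves specially — here a dimension count using Proposition \ref{prop3}, $\dim T_n = n-2$, together with $T_n \subseteq R_n$, shows $\dim R_n \geq n-2$, and one wants equality). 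By the last sentence of Proposition \ref{uniqQ}, through a general point of $Q_j$ there is a \emph{unique} transversal for $\{\Pi_i : i\neq j\}$; since this holds for every $j$ at a general point $q$ of $R_n$, the $n+1$ transversals $t_0,\dots,t_n$ through $q$ must all coincide with a single line $\ell$, and $\ell$ then meets $\Pi_i$ for all $i$ (as $\ell = t_j$ meets all $\Pi_i$ with $i \neq j$, and letting $j$ vary covers every index). Thus $\ell$ is a transversal for all $n+1$ subspaces, giving $q \in T_n$; since this holds on a dense open subset of $R_n$ and $T_n$ is closed, $R_n \subseteq T_n$ provided $R_n$ is irreducible, or more carefully, provided every component of $R_n$ has a general point of this type.

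The main obstacle I anticipate is the irreducibility/dimension bookkeeping for $R_n = Q_0\cap\dots\cap Q_n$: a priori this is an intersection of $n+1$ hypersurfaces of degree $n-1$ in $\P^n$, which could be reducible or have excess components, and one must rule out components of $R_n$ lying entirely inside the "bad loci" of some $Q_j$ where the transversal through a point is non-unique. I would address this by comparing with the parameter space $\rho_n$ from the proof of Proposition \ref{prop3}: the incidence correspondence $\pi_1^{-1}(\rho_n) \to T_n$ is generically injective and $\dim T_n = n-2$, so $T_n$ is a genuine $(n-2)$-dimensional subvariety of $R_n$; combined with the constraint that $R_n$ sits inside each degree-$(n-1)$ hypersurface $Q_j$, and that (from Proposition \ref{prop2} and the structure of $B_n$) the "extra" locus beyond $\Pi_0\cup\dots\cup\Pi_n$ is precisely $R_n$, one expects $R_n$ to have pure dimension $n-2$ with $T_n$ as a dense subset. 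If a slicker argument is available, it is to invoke Proposition \ref{uniqQ}'s description of each $Q_j$ as a union of transversals and run the uniqueness argument above directly on each irreducible component of $R_n$, using that a general point of any such component is general in at least one $Q_j$; I would present the dimension-count version as the safe route and remark that the equality $T_n = R_n$ then also pins down $\dim R_n = n-2$ and the irreducibility of $R_n$ for $n > 3$ via Proposition \ref{prop3}.
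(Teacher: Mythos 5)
The inclusion $T_n\subseteq R_n$ is fine, but your argument for $R_n\subseteq T_n$ breaks down at its central step. From ``through a general point of $Q_j$ there is a unique transversal for $\{\Pi_i: i\neq j\}$'' you conclude that the $n+1$ lines $t_0,\dots,t_n$ through $q$ must all coincide. This does not follow: each uniqueness statement concerns a different collection of $n$ subspaces, and $t_0$ (which meets $\Pi_1,\dots,\Pi_n$ but a priori not $\Pi_0$) is not a competitor of $t_j$ for the collection $\{\Pi_i: i\neq j\}$, so uniqueness for that collection says nothing about it. To force coincidence you would need uniqueness of the transversal through $q$ for an $(n-1)$-element subcollection such as $\Pi_2,\dots,\Pi_n$; but that uniqueness (Proposition \ref{lem1}) is only valid for general points of $\P^n$, and points of $R_n$ are precisely the special points where it can fail --- Section 5 of the paper exhibits points of $Q_0\cap Q_1$ admitting two distinct transversals for $\Pi_2,\dots,\Pi_n$. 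On top of this, your reduction to a general point of $R_n$ needs every irreducible component of $R_n$ to meet the open locus of each $Q_j$ where the transversal is unique; you flag exactly this worry (components hidden in the bad loci) but resolve it only with ``one expects,'' and the dimension observation $\dim T_n=n-2\leq\dim R_n$ does not rule it out.

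The paper's proof avoids genericity altogether, and its key idea is the one missing from your proposal: a linear dependence argument valid at \emph{every} point. Fix any $p\in R_n$ not lying on any $\Pi_j$; for each $j$ choose (existence suffices, by Proposition \ref{uniqQ}) a transversal $L_j$ through $p$ for $\{\Pi_i: i\neq j\}$. These are $n+1$ lines through the common point $p$ in $\P^n$, hence one of them, say $L_0$, lies in the linear span of the others. Since $L_1,\dots,L_n$ all meet $\Pi_0$ and $p\notin\Pi_0$, the elementary fact that a line through $p$ contained in the span of lines through $p$, each meeting a linear subspace $\mathcal{P}$ with $p\notin\mathcal{P}$, must itself meet $\mathcal{P}$, shows that $L_0$ meets $\Pi_0$; hence $L_0$ is a transversal for all of $\Pi_0,\dots,\Pi_n$ and $p\in T_n$. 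If $p\in\Pi_j$ for some $j$, the transversal through $p$ for the remaining $n$ subspaces already meets $\Pi_j$ at $p$ itself. As written, your coincidence step fails and the genericity transfer is unproven, so the proposal does not constitute a proof.
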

\begin{proof}
	
	Let us start with the following fact.
Let $L_0,\dots,L_k,L$ be lines through a common point $p$. Let $L$ belong to a
space generated by $L_0,\dots,L_k,$ let $\cal{ P}$ be a linear subspace, such that $p$ does not
lie on $\cal{ P}$. Let $L_j$ intersect $\cal{ P}$ at a point $l_j, j=0...k$. Then $L$ intersects $\cal{ P}$,
as the linear combination of a
projection of some vectors is a projection of the combination.

Now we can show that the intersection of all $Q_j$ lies in $T_n$, the union
of all transversals. Observe, that the oposite inclusion is obvious.

Take a point $p$ in all $Q_j$, but not in any $\Pi_j$. So for each $j$, there is $L_j$
through $p$, transversal to all $Q_i$ except $Q_j$. We have $n+1$ such lines in
$\P^n$, hence there must be a linear dependence among them.

Without loss of generality, let $L_0$ belong to the space generated by the
others. Then using above lemma for $\cal{ P}=Pi_0$ we get that $L_0$ intersects
$Pi_0$ (since $L_1,\dots,L_n$ intersects $Pi_0$), which finishes the proof.

If $p \in Pi_j$ for some $j$,  the proof is trivial.
\end{proof}

\begin{proposition}\label{prop5}
The Veneroni transformation $v_n: \P^n\dashrightarrow \P^n$ is injective
off of $Q_0\cup\dots\cup Q_n$, hence it is a Cremona transformation.
\end{proposition}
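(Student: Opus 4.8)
The plan is to show that $v_n$ separates points of $\P^n\setminus(Q_0\cup\cdots\cup Q_n)$. The idea: if $v_n(p)=v_n(p')$ with $p\neq p'$ and neither point on any $Q_j$, then the line $\ell=\overline{pp'}$ will be forced to meet \emph{all} of $\Pi_0,\dots,\Pi_n$, i.e.\ $\ell$ is a transversal for all $n+1$ subspaces, so $p\in\ell\subseteq T_n=R_n\subseteq Q_0$ by Proposition~\ref{Tn=Rn}, contradicting $p\notin\bigcup_jQ_j$. The one substantial step is the bridge from ``$v_n(p)=v_n(p')$'' to ``$\ell$ meets every $\Pi_k$'', which comes from a polynomial identity hidden in the matrix $B$.

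I would first extract that identity. Every row of $B$ has entry-sum zero (the off-diagonal entries of row $k$ add up to $f_k$, cancelling the diagonal term $-f_k$), so $B\,\mathbf 1=0$ with $\mathbf 1=(1,\dots,1)^{T}$; in particular $\det B\equiv 0$. Since $\det B_0=x_0Q_0\not\equiv 0$, the corank of $B$ is generically $1$, so $\ker B=\langle\mathbf 1\rangle$ on a dense open set; feeding this into $B\cdot\operatorname{adj}(B)=(\det B)I=0$ shows each column of $\operatorname{adj}(B)$ is a polynomial multiple of $\mathbf 1$, and as the $j$-th diagonal entry of $\operatorname{adj}(B)$ is $\det B_j=x_jQ_j$ we get $\operatorname{adj}(B)=\mathbf 1\cdot(x_0Q_0,\dots,x_nQ_n)$. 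Substituting into $\operatorname{adj}(B)\cdot B=0$ and reading off the $k$-th coordinate gives, after cancelling $x_k$,
\begin{equation*}
\sum_{i\neq k}a_{i,k}\,x_iQ_i=f_kQ_k\qquad(k=0,\dots,n).\tag{$\star$}
\end{equation*}
Equivalently, with the bilinear form $G_k(x,z):=z_kf_k(x)-x_k\sum_{i\neq k}a_{i,k}z_i$, one has $G_k\bigl(x,(x_0Q_0,\dots,x_nQ_n)\bigr)\equiv 0$ for every $k$.

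Next I would run the separation argument. Let $p,p'\in\P^n\setminus\bigcup_jQ_j$ with $v_n(p)=v_n(p')$. The base locus of $v_n$ is $\Pi_0\cup\cdots\cup\Pi_n\cup R_n$ by Proposition~\ref{prop2}, and this lies in $\bigcup_jQ_j$ (since $\Pi_k\subseteq Q_j$ whenever $j\neq k$, and $R_n=\bigcap_jQ_j$); so $p,p'$ are not base points and we may choose homogeneous representatives with $\mathbf Y(p)=\lambda\,\mathbf Y(p')$ for some $\lambda\neq 0$, where $\mathbf Y(q)=(q_0Q_0(q),\dots,q_nQ_n(q))$. Putting $x=p'$ in the identity $G_k(x,\mathbf Y(x))\equiv 0$ and using bilinearity to replace $\mathbf Y(p')$ by $\lambda^{-1}\mathbf Y(p)$ gives $G_k(p',\mathbf Y(p))=0$; expanding this and inserting $(\star)$ at $p$ (which turns $\sum_{i\neq k}a_{i,k}p_iQ_i(p)$ into $f_k(p)Q_k(p)$) collapses it to $Q_k(p)\bigl(p_kf_k(p')-p'_kf_k(p)\bigr)=0$, and since $p\notin Q_k$ we conclude $p_kf_k(p')=p'_kf_k(p)$ for every $k$. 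If $p\neq p'$, the vanishing of the $2\times 2$ determinant $p_kf_k(p')-p'_kf_k(p)$ is exactly the condition that $x_k$ and $f_k$ have a common zero on $\ell=\overline{pp'}$, i.e.\ that $\ell$ meets $\Pi_k=V(x_k,f_k)$. As this holds for all $k$, $\ell$ is a transversal for $\Pi_0,\dots,\Pi_n$, so $\ell\subseteq T_n=R_n\subseteq Q_0$, contradicting $p\in\ell$, $p\notin\bigcup_jQ_j$. Hence $p=p'$, so $v_n$ is injective on the dense open set $\P^n\setminus\bigcup_jQ_j$; being injective on a dense open set in characteristic $0$ it is birational onto its image, and since that image has dimension $n$ it is all of $\P^n$, so $v_n$ is a Cremona transformation.

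I expect the main obstacle to be the identity $(\star)$: it is the only non-formal ingredient --- it records how $v_n$ ``sees'' the subspaces $\Pi_k$ --- and the work lies in teasing it out of the combinatorics of $B$ (the vanishing row sums and the rank-one shape of $\operatorname{adj}(B)$). Once $(\star)$ is in hand the rest is bookkeeping, the only other point worth a sentence being the elementary check that ``$\ell$ meets $\Pi_k$'' is equivalent to the vanishing of that $2\times 2$ determinant, degenerate cases included.
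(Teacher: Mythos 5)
Your argument is correct, and its skeleton is the paper's: two distinct points $p\neq p'$ off $Q_0\cup\dots\cup Q_n$ in the same fiber force the line $\overline{pp'}$ to be a transversal for all of $\Pi_0,\dots,\Pi_n$, whence $p\in T_n\subseteq R_n\subseteq Q_0$, a contradiction (the containment $T_n\subseteq R_n$, noted before Proposition \ref{prop2}, already suffices; you quote the stronger Proposition \ref{Tn=Rn}). Where you differ is the mechanism for the key step. The paper argues softly via Proposition \ref{prop4}: the forms $HQ_i$ with $H$ a hyperplane containing $\Pi_i$ span $\mL_n$, so $v_n(p)=v_n(p')$ means the same members of $\mL_n$ vanish at $p$ and $p'$; taking $H_i$ to be the unique hyperplane through $p$ and $\Pi_i$ and using $Q_i(p')\neq 0$, one gets $p'\in H_i$ for every $i$, and any line through $p$ in $\bigcap_i H_i$ meets each $\Pi_i$ since $\Pi_i$ is a hyperplane inside $H_i$. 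You instead extract the explicit relation $(\star)$ from $\operatorname{adj}(B)$; note that $(\star)$ is precisely the expansion $f_kQ_k=\sum_i b_{k,i}x_iQ_i$ that the paper uses in Section 4 with unspecified coefficients, and your derivation identifies $b_{k,i}=a_{i,k}$ (consistent with the paper's aside that $b_{i,j}=0$ iff $i=j$). Moreover your condition $p_kf_k(p')=p'_kf_k(p)$ is exactly the statement that $p'$ lies on the unique hyperplane through $p$ containing $\Pi_k=V(x_k,f_k)$, so the two proofs converge to the same geometric fact. What your route buys is a self-contained, coordinate-level verification that does not lean on the spanning statement of Proposition \ref{prop4} for the separation step (and yields the coefficients $b_{i,j}$ as a byproduct, useful for the inverse map in Section 4); what the paper's route buys is brevity, since the hyperplane formulation makes the transversal appear with no computation. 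Both conclude identically: generic injectivity of a dominant map between $n$-dimensional varieties in characteristic $0$ gives birationality, hence $v_n$ is Cremona.
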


\begin{proof}
By Proposition \ref{prop4}, sections of the form $H_i'Q_i, i=0,\dots,n$,
span $\mL_n$, where $H_i'$ is a hyperplane containing $\Pi_i$. For a point $p$ not in $Q_0\cup\dots\cup Q_n$
(and hence not in $\Pi_0\cup\dots\cup\Pi_n$), $H_i'+Q_i$ vanishes at $p$
if and only if $H_i'$ does. Moreover,
there is for each $i$ a unique hyperplane $H_i$ containing $\Pi_i$ and $p$. If the intersection of all such $H_i$
is not exactly $p$, then the intersection $H_0\cap\dots\cap H_n$
is a positive dimensional linear space,
and any line through $p$ in this space intersects each $\Pi_i$ and hence is a transversal for $\Pi_0,\dots,\Pi_n$,
and so $p$, being on a transversal, is in $T_n\subseteq R_n\subseteq B_n$.
Thus $v_n$ is injective off of $Q_0\cup\dots\cup Q_n$.
\end{proof}

\section{An inverse for $v_n$}

It is of interest to determine an inverse for $v_n$, and to observe that the inverse is
again given by forms of degree $n$ vanishing on $n+1$ codimension 2 linear subspaces.
We explicitly define such a map $u_n$ and then check that it is an inverse
for $v_n:\P^n\dashrightarrow \P^n$.
If we regard $x_0,\dots,x_n$ as homogeneous coordinates on the source $\P^n$ and
$y_0,\dots,y_n$ as homogeneous coordinates on the target $\P^n$, then $v_n$ is defined
by the homomorphism $h$ on homogeneous coordinate rings given by $h:y_i= x_iQ_i=\det(B_i)$,
as we saw in \S \ref{sect3}.

To define $u_n$, we slightly modify matrix $B$ from \S \ref{sect3}
by replacing the diagonal entries $-f_i$ in $B$ by $-g_i$ (defined below) and by replacing
each entry $a_{i,j}x_j$ in $B$ by $a_{i,j}y_j$ to obtain a new matrix
$$C=\left(\begin{array}{cccccc}
-g_0 & a_{0,1}y_1 & \dots & a_{0,k}y_k & \dots & a_{0,n}y_n \\
a_{1,0}y_0 &-g_1 & \dots & a_{1,k}y_k & \dots & a_{1,n}y_n \\
\vdots &\vdots& & \vdots & & \vdots \\
a_{k,0}y_0 &a_{k,1}y_1 & \dots & -g_k & \dots & a_{k,n}y_n \\
\vdots & \vdots& & \vdots & & \vdots \\
a_{n,0}y_0 &a_{n,1}y_1 & \dots & a_{n,k}y_k &\dots & -g_n
\end{array}\right).$$

To define $g_i$, recall that since $f_iQ_i\in \mL_n$ for each $i$ and the forms $x_jQ_j$
give a basis for  $\mL_n$, we can for each $i$
and appropriate scalars $b_{i,j}$ write
$$f_iQ_i=b_{i,0}x_0Q_0+\dots+b_{i,n}x_nQ_n.$$
We define $g_i$ to be $g_i=b_{i,0}y_0+\dots+b_{i,n}y_n$, so
we see that $h(g_i)=f_iQ_i$.

As an aside we also note that $b_{i,j}=0$ if and only if $i=j$.
(To see this, recall by Remark \ref{coordVertsRemark} that
no $Q_j$ vanishes at any coordinate vertex $p_k$,
but $f_i$ vanishes at the coordinate vertex $p_j$ if and only if $i=j$. Thus, evaluating
$f_iQ_i=b_{i,0}x_0Q_0+\dots+b_{i,n}x_nQ_n$
at $p_i$ gives $0=b_{i,i}Q_i$, hence $b_{i,i}=0$, while evaluating at $p_j$ for $j\neq i$ gives
$0\neq b_{i,j}Q_j$, hence $b_{i,j}\neq0$.)

Let $C_i$ be the matrix obtained from $C$ by deleting row $i$ and column $i$.
Define a homomorphism $\lambda: \GF[x_0,\dots,x_n] \to\GF[y_0,\dots,y_n]$
by $\lambda(x_i)=\det(C_i)$.

The next result gives an inverse for $v_n$.

\begin{proposition}\label{inverse}
The homomorphism $\lambda$ defines a birational map $u_n:\P^n\dashrightarrow\P^n$ which is inverse to $v_n$.
\end{proposition}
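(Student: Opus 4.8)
The plan is to show that the composition $v_n \circ u_n$ (and by symmetry $u_n \circ v_n$) is the identity on a dense open set, by tracking what happens to the matrices $B$ and $C$ under the relevant homomorphisms. The key observation is that $h\circ\lambda$ and $\lambda\circ h$ should each be (up to a common scalar factor) the identity on homogeneous coordinate rings, which would force $v_n\circ u_n$ and $u_n\circ v_n$ to be birational inverses.

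First I would record the structural compatibility: the matrix $C$ is obtained from $B$ by the substitutions $x_j \mapsto y_j$ in the off-diagonal entries $a_{i,j}x_j \mapsto a_{i,j}y_j$, and $-f_i \mapsto -g_i$, where crucially $h(g_i) = f_i Q_i$ and $h(a_{i,j}y_j) = a_{i,j}x_j Q_j$. So applying $h$ to the matrix $C$ does \emph{not} give $B$ back directly; rather, $h$ applied to the $i$th column of $C$ gives the $i$th column of $B$ with every entry multiplied by $Q_i$ — wait, more carefully: the $i$th column of $C$ has entries $a_{j,i}y_i$ for $j\neq i$ and $-g_i$ in row $i$, and $h$ sends these to $a_{j,i}x_iQ_i$ and $-f_iQ_i$. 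So $h$ applied to column $i$ of $C$ equals $Q_i$ times column $i$ of $B$. Therefore $h(\det C) = (\prod_{i=0}^n Q_i)\det B$, and more importantly $h(\det C_i) = (\prod_{j\neq i} Q_j)\det B_i = (\prod_{j\neq i}Q_j)\, x_i Q_i = x_i\prod_{j=0}^n Q_j$. Hence $h(\lambda(x_i)) = x_i \cdot \prod_{j=0}^n Q_j$ for every $i$, so $h\circ\lambda$ is multiplication by the single form $\prod_j Q_j$. This shows $v_n\circ u_n = \mathrm{id}$ as rational maps.

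Next I would run the symmetric argument for $\lambda\circ h$. Here I need an analogue on the $u_n$ side: the map $u_n$ is built from $C$ exactly as $v_n$ is built from $B$, with the roles of the $\Pi_i$ replaced by the codimension $2$ subspaces $\Pi_i'$ cut out by $(y_i, g_i)$ (one should check these are again $n+1$ codimension $2$ subspaces, using that $b_{i,j}=0 \iff i=j$, which is exactly the genericity-type condition from Remark \ref{coordsRemark}). By the same determinantal bookkeeping, $\lambda(h(y_i)) = \lambda(x_iQ_i) = \lambda(x_i)\lambda(Q_i) = \det(C_i)\cdot\lambda(Q_i)$, and one shows $\lambda(Q_i) = \prod_{j\neq i}\det(C_j)\,/\,(\text{appropriate factor})$ so that the product telescopes to $y_i\prod_j(\text{something})$; concretely, by the mirror-image computation $\lambda$ applied to column $i$ of $B$ gives $\det(C_i)$ times column $i$ of $C$, hence $\lambda(\det B_i) = \lambda(x_iQ_i)$ equals $y_i$ times a fixed form independent of $i$. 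Thus $\lambda\circ h$ is also multiplication by a single form, so $u_n\circ v_n = \mathrm{id}$ as well.

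Finally, having both composites equal to the identity as rational maps, I conclude $u_n$ is birational and is the inverse of $v_n$; since $\lambda(x_i) = \det(C_i)$ are forms of degree $n$ vanishing on the $n+1$ codimension $2$ subspaces $\Pi_i'$ defined by $(y_i,g_i)$, the inverse is indeed given by (a subsystem of) such a linear system, matching the claim in the section introduction. The main obstacle I anticipate is the second composite: verifying that applying $\lambda$ to the columns of $B$ produces the columns of $C$ scaled by $\det(C_i)$ requires knowing $\lambda(f_i) = \det(C_i)\cdot(\text{the linear form on }y\text{ representing }f_i\text{'s coefficients})$ is compatible with how $g_i$ was defined — i.e. one needs the "dual" relation expressing $g_i Q_i'$ (where $Q_i' = \lambda(Q_i)/\det(C_i)$ or similar) as a combination of the $y_j\det(C_j)$, which is the genuine content and where a careful setup of the symmetric roles of $(f_i,Q_i)$ versus $(g_i, Q_i')$ is needed rather than a purely formal substitution.
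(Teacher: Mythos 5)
Your first computation is exactly the paper's key step: applying $h$ to column $i$ of $C$ gives $Q_i$ times column $i$ of $B$ (because $h(a_{j,i}y_i)=a_{j,i}x_iQ_i$ and $h(-g_i)=-f_iQ_i$), hence $h(C)=BD$ with $D=\mathrm{diag}(Q_0,\dots,Q_n)$ and $h\lambda(x_i)=h(\det C_i)=\det(B_i)\prod_{j\neq i}Q_j=x_iQ_0\cdots Q_n$. (One small bookkeeping slip: since pullbacks compose contravariantly, $h\circ\lambda$ is the pullback of $u_n\circ v_n$, not of $v_n\circ u_n$.) Where your proposal has a genuine gap is the second composite. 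The ``mirror-image computation'' you invoke does not exist: the matrix $C$ is \emph{not} structurally symmetric to $B$, because its off-diagonal entries carry the original coefficients $a_{i,j}$ while its diagonal carries the new forms $g_i=\sum_j b_{i,j}y_j$; in particular the rows of $C$ do not sum to zero the way the rows of $B$ do, and there is no established identity relating $\lambda(f_i)=\sum_j a_{i,j}\det(C_j)$ to $g_i$ or to products of the $\det(C_j)$. Indeed your stated claim that $\lambda$ applied to column $i$ of $B$ equals $\det(C_i)$ times column $i$ of $C$ cannot hold even for degree reasons: the left side consists of forms of degree $n$ in the $y_j$, the right side of forms of degree $n+1$. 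You flag this yourself as ``the genuine content,'' but as proposed it is not a proof, and filling it by brute force would amount to re-proving the hard part of Remark \ref{VeneroniInverseRem} and more.

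The fix is that the second composite is not needed at all, and this is how the paper closes the argument: Proposition \ref{prop5} has already established that $v_n$ is birational (a Cremona transformation), so once $h\lambda(x_i)=x_iQ_0\cdots Q_n$ shows $u_n\circ v_n=\mathrm{id}$ on the complement of $Q_0\cdots Q_n=0$, composing with the (abstract) inverse of $v_n$ shows $u_n$ coincides with that inverse, hence $u_n$ is birational and is the two-sided inverse. (Alternatively, even without citing Proposition \ref{prop5}, the one identity already forces $v_n$ to be generically injective and dominant, hence birational in characteristic $0$, and a generic one-sided inverse of a birational map is its inverse.) So: keep your first paragraph, correct which composite it computes, delete the symmetric computation, and replace it with the appeal to the birationality of $v_n$.
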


\begin{proof} Note that applying $h$ to the entries of $C$ gives the matrix obtained from $BD$,
where $D$ is the diagonal matrix whose diagonal entries are $Q_0,\dots,Q_n$, from which it is easy to see that
$h(\det(C_i))=\det(B_i)Q_0\cdots Q_{i-1}Q_{i+1}\cdots Q_n=x_iQ_0\cdots Q_n$.

We now have $h\lambda(x_i)=h(\det(C_i))=x_iQ_0\cdots Q_n$, so
$u_nv_n=id_U$, where $U$ is the complement of $Q_0\cdots Q_n=0$.
Since $v_n$ is a Cremona transformation, so is $u_n$ and thus
$u_n$ is the inverse of $v_n$.
\end{proof}

\begin{remark}\label{VeneroniInverseRem}
We now confirm that the forms $\det(C_i)$ defining $u_n$
have degree $n$ and vanish on $n+1$ codimension 2 linear subspaces $\Pi_i^*\subset\P^n$.
That $\deg(\det(C_i))=n$ is clear, since $C_i$ is an $n\times n$ matrix of linear forms.

Consider the codimension two linear spaces defined by the ideals
$J_k=(y_k,g_k=b_{k,0}y_0+\dots+b_{k,n}y_n$.
Since the entries of column $k$ of $C$ are in the ideal $J_k$, it follows that
$\det(C_i)$ vanishes on $\Pi_j^*$ for $j\neq i$. It remains to check that
$\det(C_i)$ vanishes on $\Pi_i^*$.
But let $q\in Q_i$ be a point where $v_n$ is defined.
Note that $y_i(v_n(q))=h(y_i)(q)=x_iQ_i(q)=0$ and that
$g_i(v_n(q))=h(g_i)(q)=f_iQ_i(q)=0$. Thus
$v_n|_{Q_i}$ gives a rational map to $\Pi_i^*$ whose image is in the zero locus of
$\det(C_i)$ since $\det(C_i)(v_n(q))=(h(\det(C_i)))(q)=h\lambda(x_i)(q)=(x_iQ_0\cdots Q_n)(q)=0$.
Thus $\det(C_i)$ vanishing on $\Pi_i^*$ will follow if we show that
$v_n|_{Q_i}$ gives a dominant rational map
to $\Pi_i^*$. This in turn will follow if we show for a general $q\in Q_i$ that the fiber over
$v_n(q)$ has dimension 1 (since $Q_i$ as dimension $n-1$ and $\Pi_i^*$ has dimension $n-2$).
But the space of forms in $\mL_n$ vanishing on $q$ is spanned
by forms of the form $H_jQ_j$ where $H_j$ is a hyperplane
containing $q$ and $\Pi_j$. For a general point $q$,
since the $\Pi_j$ are general, the intersection of any $n-1$ of the $H_j$
with $j\neq i$ has dimension 1. Since the $\Pi_j$ are general,
the same is true for a general point $q\in Q_i$ except now,
since there is a transversal $t_q$ through $q$ for $\Pi_j$, $j\neq i$, we see that
$\cap_{j\neq i}H_j$ still has dimension 1 and is thus exactly $t_q$. Hence the locus of points
on which the forms in $\mL_n$ vanishing at $q$ vanish is exactly $t_q$.
Thus the fiber over $v_n(q)$ has dimension 1, as we wanted to show.

The question remains as to whether $u_n$ is itself a Veneroni transformation
whenever $v_n$ is. If we denote by $\mL_n^*$ the forms in $\GF[y_0,\dots,y_n]$
of degree $n$ vanishing on $\Pi_0^*\cup\dots\cup\Pi_n^*$, what we saw above is that
$u_n$ is defined by an $n+1$ dimensional linear system contained in $\mL_n^*$;
the issue is whether the linear system is all of $\mL_n^*$
(i.e., whether $\dim \mL_n^*=n+1$).

In any case, when $\Pi_0,\dots,\Pi_n$ are general, we now see that
$v_n$ gives a birational map $\P^n\dashrightarrow\P^n$
whose restriction to $Q_i$ gives a rational map to $\Pi_i^*$ for $i=0,\dots,n$
and the fiber of $Q_i$ over $\Pi_i^*$ generically has dimension 1.
It is convenient to denote the linear system of divisors of degree $n$ vanishing on
$\Pi_0\cup\dots\cup\Pi_n$ by $nH-\Pi_0-\dots-\Pi_n$.
Similarly, the linear system of divisors of degree $n-1$ vanishing
on $\Pi_j$ for $j\neq i$ is represented by
$(n-1)H-\Pi_0-\dots-\Pi_n+\Pi_i$.
Thus, if $H^*$ is the linear
system of divisors of degree 1 on the target $\P^n$ for $v_n$,
then $v_n$ pulls $H^*$ back to $nH-\Pi_0-\dots-\Pi_n$,
and it pulls $\Pi_i^*$ back to $Q_i$, represented by
$(n-1)H-\Pi_0-\dots-\Pi_n+\Pi_i$.
We can represent the pullback by a matrix map
$M_n:{\mathbb Z}^{n+1}\to{\mathbb Z}^{n+1}$ where
$$
M_n=\begin{pmatrix}
n&n-1&n-1&\dots&n-1\\
-1 & 0 &-1 &\dots&-1\\
-1&-1&0&\dots&-1\\
\vdots&\vdots&\vdots&&\vdots\\
-1&-1&-1&\dots&0
\end{pmatrix}.
$$

If in fact the spaces $\Pi_i^*$ can be taken to be sufficiently general,
then $\dim \mL_n^*=n+1$, and $u_n$ pulls $H$ back to
$nH^*-\Pi_0^*-\dots-\Pi_n^*$, and it pulls $\Pi_i$ back to
$(n-1)H^*-\Pi_0^*-\dots-\Pi_n^*+\Pi_i^*$, and hence is represented
by the same matrix $M_n$. Since $M_n^2$ corresponds to the pullback map for $u_nv_n$
and $u_nv_n$ is the identity (where defined), we would expect that $M^2_n=I_n$,
which is indeed the case.
\end{remark}

\section{Intersection of $Q_i$ and $Q_j$}

This section is devoted to investigating the intersections of $Q_i$ and $Q_j$,
assuming that $\Pi_0,\dots,\Pi_n$ are general linear subspaces of codimension 2.
These intersections were already treated in \cite{Snyder2} and in more detail than here,
but here we use more modern language.

Without loss of generality assume that $i=0,j=1$, so take $Q_0\cap Q_1$.
From the considerations above (Proposition \ref{prop2}) we may write
$$Q_0\cap Q_1=T_n\cup\Pi_2\cup\dots\cup \Pi_{n}\cup M_n$$
where $M_n$ is the closure of the complement of
$T_n\cup\Pi_2\cup\dots\cup \Pi_{n}$ in $Q_0\cap Q_1$.

\begin{proposition}
The complement of $T_n\cup\Pi_2\cup\dots\cup \Pi_{n}$
in $Q_0\cap Q_1$ is the set of all
points $q\in Q_0\cap Q_1$ through which there is no transversal for $\Pi_0,\dots,\Pi_n$,
(in which case there is more than one transversal through $q$ for $\Pi_2,\dots,\Pi_{n}$).
\end{proposition}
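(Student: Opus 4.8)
The plan is a point-by-point analysis of $Q_0\cap Q_1$, using that the geometric input is already in hand: $Q_0$ is the union of the transversals for $\Pi_1,\dots,\Pi_n$ and $Q_1$ the union of the transversals for $\Pi_0,\Pi_2,\dots,\Pi_n$ (Proposition~\ref{uniqQ}), and $T_n=R_n=Q_0\cap\dots\cap Q_n$ (Proposition~\ref{Tn=Rn}). I would start with a reduction to points lying on none of the $\Pi_i$. By construction the complement in question already avoids $\Pi_2,\dots,\Pi_n$, so it is enough to check $(\Pi_0\cup\Pi_1)\cap(Q_0\cap Q_1)\subseteq T_n$: if $q\in\Pi_0\cap Q_0$, then by Proposition~\ref{uniqQ} applied to the $n$ spaces $\Pi_1,\dots,\Pi_n$ the point $q$ lies on a transversal $t$ for $\Pi_1,\dots,\Pi_n$, and since $q\in\Pi_0$ the line $t$ also meets $\Pi_0$, so $t$ is a transversal for $\Pi_0,\dots,\Pi_n$ and $q\in T_n$; the case $q\in\Pi_1\cap Q_1$ is symmetric (use $Q_1$ and the spaces $\Pi_0,\Pi_2,\dots,\Pi_n$). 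So a point of the complement lies on no $\Pi_i$ at all.

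The core step is then the equivalence, for $q\in Q_0\cap Q_1$ with $q\notin\Pi_0\cup\dots\cup\Pi_n$, between ``$q\in T_n$'' and ``there is a transversal for $\Pi_0,\dots,\Pi_n$ through $q$''. If such a transversal $t$ exists, then for each index $i$ the line $t$ meets the remaining $n$ spaces, hence lies on $Q_i$, so $q\in Q_0\cap\dots\cap Q_n=R_n=T_n$. Conversely, if $q\in T_n=R_n$ then $q\in Q_j$ for every $j$, so by Proposition~\ref{uniqQ} there is for each $j$ a line $L_j\ni q$ transversal to all $\Pi_i$ with $i\ne j$; the $n+1$ lines $L_0,\dots,L_n$ through the single point $q$ must satisfy a linear dependence, so after reindexing $L_0$ lies in the span of $L_1,\dots,L_n$, and the linear-algebra observation in the proof of Proposition~\ref{Tn=Rn} (projecting this dependence onto $\Pi_0$, which does not contain $q$) shows that $L_0$ meets $\Pi_0$ as well, i.e.\ $L_0$ is a transversal for all of $\Pi_0,\dots,\Pi_n$. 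Combined with the reduction, this identifies the complement of $T_n\cup\Pi_2\cup\dots\cup\Pi_n$ in $Q_0\cap Q_1$ as exactly the set of points of $Q_0\cap Q_1$, lying on no $\Pi_i$, through which no transversal for $\Pi_0,\dots,\Pi_n$ passes.

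Finally, for the parenthetical statement, let $q$ be in that complement. Since $q\in Q_0$, Proposition~\ref{uniqQ} gives a transversal $t_0\ni q$ for $\Pi_1,\dots,\Pi_n$, and since $q\in Q_1$ a transversal $t_1\ni q$ for $\Pi_0,\Pi_2,\dots,\Pi_n$. If $t_0=t_1$, this common line would meet every $\Pi_i$, contradicting the choice of $q$; hence $t_0\ne t_1$ are two distinct transversals for $\Pi_2,\dots,\Pi_n$ through $q$ (and then, by Proposition~\ref{lem1}, $q$ lies on a positive-dimensional subspace swept by transversals for $\Pi_2,\dots,\Pi_n$). The step I expect to be the main obstacle is making the bookkeeping of the first two paragraphs airtight: deciding precisely which points of $Q_0\cap Q_1$ that happen to lie on some $\Pi_i$ land in $T_n$ rather than in $\Pi_2\cup\dots\cup\Pi_n$, and checking that one is entitled throughout to replace the closure $T_n$ by the honest intersection $R_n$. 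That is where Proposition~\ref{Tn=Rn} (and, beneath it, the general-position and characteristic-zero hypotheses feeding Bertini's theorem in Propositions~\ref{uniqQ} and~\ref{prop3}) does the real work, whereas the transversal arguments themselves are short.
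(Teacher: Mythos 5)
Your proof is correct, and its engine is the same as the paper's: from $q\in Q_0$ and $q\in Q_1$, Proposition~\ref{uniqQ} supplies a transversal for $\Pi_1,\dots,\Pi_n$ and one for $\Pi_0,\Pi_2,\dots,\Pi_n$ through $q$, and the dichotomy ``common transversal versus two distinct transversals for $\Pi_2,\dots,\Pi_n$'' gives both the membership in $T_n$ and the parenthetical claim. Where you diverge is in how you justify the implication ``$q\notin$ (union of full transversals) $\Rightarrow q\notin T_n$'': the paper simply asserts this (after disposing of $n=2,3$ by direct inspection of $Q_0\cap Q_1$), while you route it through Proposition~\ref{Tn=Rn}, writing $T_n=R_n$ and rerunning the linear-dependence/projection argument to manufacture an honest transversal through any point of $R_n$ lying on no $\Pi_i$. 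That detour is not needed if one observes that the union of transversals is already closed (it is the image of the projective incidence set used in Proposition~\ref{prop3}, so $T_n$ literally equals that union), but it does make the closure point explicit rather than implicit, which is a genuine tightening. You also add a reduction the paper omits, showing that points of $Q_0\cap Q_1$ on $\Pi_0\cup\Pi_1$ lie on a full transversal and hence in $T_n$; the paper's argument covers these points only implicitly. One caveat applies to both versions: your final characterization is of the points of $Q_0\cap Q_1$ \emph{lying on no $\Pi_i$} with no transversal, whereas the statement omits that qualifier; the paper's proof has exactly the same restriction (it assumes $q\notin\Pi_2\cup\dots\cup\Pi_n$ from the start and never checks that points of $Q_0\cap Q_1$ on those spaces admit full transversals), so you have matched, and flagged, the paper's level of precision rather than fallen short of it.
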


\begin{proof}
For $n=2$ it is easy to check that $Q_0\cap Q_1=\Pi_2$ and that $T_n=M_n=\varnothing$.
For $n=3$, keeping in mind that $Q_0=\P^1\times\P^1$,
$Q_0\cap Q_1$ is a divisor on $Q_0$ of multi-degree $(2,2)$, consisting of
the lines $\Pi_2$ and $\Pi_3$ together with the two transversals
for $\Pi_0,\dots,\Pi_3$ (these two transversals give $T_n$); again $M_n$ is empty. (See, for example, the description of the
cubo-cubic Cremona transformation from \cite{dolg} or \cite{DHRST-G}.)

So now assume that $n\geq 4$.
Take a point $q$ from $Q_0\cap Q_1$. Suppose $q$ is not in $\Pi_2\cup\dots\cup\Pi_n$.
Since $q\in Q_0$, by Proposition \ref{uniqQ} there is at least one transversal through
$q$ for $\Pi_1,\dots,\Pi_n$ and since $q\in Q_1$ there is similarly at least one transversal through
$q$ for $\Pi_0,\Pi_2\dots,\Pi_n$. If one of the transversals coming from $q\in Q_0$
is also a transversal coming from $q\in Q_1$, then it follows that the transversal
goes through all $\Pi_j$, so the transversal (and hence $q$) is contained in $T_n$.
Otherwise, $q\not\in T_n$, hence
there are two lines through $q$ transversal for $\Pi_2,\dots,\Pi_n$.
\end{proof}

\begin{example}
We close by showing for $n=4$ that the complement of $T_4\cup\Pi_2\cup\dots\cup \Pi_{4}$
in $Q_0\cap Q_1$ is nonempty.

Take three points $p_{ij}$, where $p_{ij}=\Pi_i\cap\Pi_j$, for $j=2,3,4, i\neq j$. Let $\pi$ be the plane spanned by the three points. Take a general point $q$ on $\pi$.  From the fact that all $\Pi_j$ are general, we have that $q$, $p_0:=\pi\cap \Pi_0$ and $p_1:=\pi\cap\Pi_1$ are not on a line.
Then the line through $q$ and $p_0$ is a transversal to $\Pi_0,\Pi_2,\Pi_3,\Pi_4$, so it is in $Q_0$ (and in $\pi$ of course). In the same way, the line  through $q$ and $p_1$ is a transversal to $\Pi_1,\Pi_2,\Pi_3,\Pi_4$, so it is in $Q_1$, thus $q$ is in $Q_0\cap Q_1$.

To prove that $M_4\not\subset T_4$, take a projection from a given point (not in $\Pi_2,\Pi_3,\Pi_4$) to a general hyperplane. Then the intersection of the images of $\Pi_2,\Pi_3,\Pi_4$is either a point - and then there is only one transversal to $\Pi_2,\Pi_3,\Pi_4$ through this point - or this intersection is a line,
and then we have a plane of transversals from our point. From this construction it follows that we may have at most a plane of transversals to $\Pi_2,\Pi_3,\Pi_4$. As $\Pi_1, \Pi_0$ are general, the generic transversal on $\pi$ is not transversal to $\Pi_1, \Pi_0$.

\end{example}

\begin{remark} Snyder and Rusk in \cite{Snyder2}
	  assert that
	$\deg(R_n)=\frac{(n+1)(n-2)}{2}$  and that $\deg(M_n)=\frac{(n-2)(n-3)}{2}$ We plan
a future paper explaining these results and showing also precisely that the inverse of a Veneroni transformation is always a Veneroni.
\end{remark}

\bigskip
\noindent
{\bf Acknowledgements}: {\small Farnik was partially supported by National Science Centre, Poland, grant 2018 /28/C/ST1/00339,
Harbourne was partially supported by Simons Foundation grant \#524858.
 Szemberg was partially supported by National Science Centre grant 2018/30/M/ST1/00148.
Harbourne and Tutaj-Gasi\'nska were partially supported by National Science Centre grant 2017/ 26/M/ST1/00707.
Harbourne and Tutaj-Gasi\'nska thank the Pedagogical University of Cracow, the Jagiellonian University
and the University of Nebraska for hosting reciprocal visits by Harbourne and Tutaj-Gasi\'nska
when some of the work on this paper was done.}

{\footnotesize \noindent
Marcin Dumnicki, Halszka Tutaj-Gasi\'nska\\
	Faculty of Mathematics and Computer Science, Jagiellonian University\\
{\L}ojasiewicza 6, PL-30-348 Krak\'ow, Poland\\
Marcin.Dumnicki@im.uj.edu.pl\\
Halszka.Tutaj@im.uj.edu.pl
\\
\\
\noindent
{\L}ucja Farnik, Tomasz Szemberg\\
Department of Mathematics, Pedagogical University Cracow\\
Podchor\c{a}\.zych 2, PL-30-084 Krak\'ow, Poland\\
lucja.farnik@gmail.com\\
Tomasz.Szemberg@gmail.com
\\
\\
\noindent
Brian Harbourne\\
Department of Mathematics,	University of Nebraska\\
	Lincoln, NE 68588-0130 USA\\
	bharbourne1@unl.edu

}

\end{document}